\documentclass[conference]{IEEEtran}
\IEEEoverridecommandlockouts

\usepackage{cite}
\usepackage{amsmath,amssymb,amsfonts, amsthm}
\usepackage{algorithmic}

\usepackage{graphicx}
\usepackage{textcomp}
\usepackage{xcolor}
\usepackage{hyperref}

\newtheorem{theorem}{Theorem}
\newtheorem{lemma}{Lemma}
\newtheorem{algorithm}{Algorithm}

\newcommand{\R}{\mathbb R}
\newcommand{\C}{\mathbb C}

\newcommand{\mcM}{\mathcal M}

\newcommand{\Sptn}{\operatorname{Sp}(2n,\R)}
\newcommand{\sptn}{\mathfrak{sp}(2n,\R)}
\newcommand{\shtn}{\mathfrak{sh}(2n,\R)}
\newcommand{\Sptp}{\operatorname{Sp}(2p,\R)}
\newcommand{\sptp}{\mathfrak{sp}(2p,\R)}
\newcommand{\shtp}{\mathfrak{sh}(2p,\R)}
\newcommand{\SpStnp}{\operatorname{SpSt}(2n,2p)}

\newcommand{\sym}{\operatorname{sym}}

\DeclareMathOperator{\diag}{diag}
\makeatletter
\renewcommand*\env@matrix[1][*\c@MaxMatrixCols c]{%
	\hskip -\arraycolsep
	\let\@ifnextchar\new@ifnextchar
	\array{#1}}
\makeatother

\def\BibTeX{{\rm B\kern-.05em{\sc i\kern-.025em b}\kern-.08em
    T\kern-.1667em\lower.7ex\hbox{E}\kern-.125emX}}
\begin{document}

\title{A polar-factor retraction on the symplectic Stiefel manifold with closed-form inverse*\\
\thanks{This work was supported by the Independent Research Foundation Denmark, DFF, grant nr. 3103-00094B.}
}

\author{\IEEEauthorblockN{Ralf Zimmermann}
\IEEEauthorblockA{\textit{Department of Mathematics and Computer Science} \\
\textit{University of Southern Denmark}\\
Odense, Denmark \\
https://orcid.org/0000-0003-1692-3996}
}

\maketitle

\begin{abstract}
In Riemannian computing applications, it is crucial to map manifold data to a Euclidean domain, where vector space arithmetics are available, and back. Classical manifold theory guarantees the existence of such mappings, called charts and parameterizations, or, collectively, local coordinates. When computational efficiency is of the essence, practitioners usually resort to retraction maps to define local coordinates. Retractions yield first-order approximations of the Riemannian normal coordinates.

 This work introduces a new retraction on the symplectic Stiefel manifold that features a closed-form inverse. 
 We expose essential features and compare the numerical performance to a selection of existing retractions.
 To the best of our knowledge, prior to this work, the so-called  Cayley retraction was the only retraction on the symplectic Stiefel manifold with known closed-form inverse.
\end{abstract}

\begin{IEEEkeywords}
Symplectic Stiefel manifold, retraction, manifold interpolation, manifold optimization, local coordinates, Riemannian exponential, Riemannian computing
\end{IEEEkeywords}

\section{Introduction}
Practical data processing on manifolds requires local coordinates, which make it possible to map data `there and back':
‘There’ means mapping data from a Euclidean coordinate domain, for example the tangent space, to the manifold.
‘Back’ refers to the reverse action of mapping manifold data into a coordinate domain.

Let $\mcM$ be a Riemannian manifold and $x\in\mcM$.
The canonical Euclidean coordinate domain associated with $\mcM$ at $x$ is the tangent space $T_x\mcM$.
According to \cite[Ch. 4]{AbsilMahonySepulchre2008}, \cite[Sec. 3.6]{Boumal2023}, a retraction $R$ on $\mcM$ is a smooth mapping from the tangent bundle $T\mcM$ to $\mcM$ such that (1) for all $x\in\mcM$, $R_x: T_x\mcM \to \mcM$,\footnote{The concept works the same, if all $R_x$ are defined on open subsets around $0$ in the associated tangent spaces $T_x\mcM$, $x\in \mcM$.} (2) $R_x(0)=x$, where $0$ is the zero of $T_x\mcM$, and (3) the differential of $R_x$ at $0$ is the identity, $d(R_x)_0 = \mathrm{id}_{T_x\mcM}$.\\
The latter property guarantees the existence of a local inverse 
\[
	(R_x)^{-1}: \mcM\supset \mathcal{D} \to T_x\mcM,
\]
obviously defined only on a suitable small domain $\mathcal{D}\subset \mcM$ around $x$.

In certain applications, e.g., in Riemannian optimization, only the forward retraction is needed, but for manifold interpolation or for computing Riemannian averages, there must also be an efficient way to compute $(R_x)^{-1}$.\\
Moreover, $(R_x)^{-1}$ gives rise to a vector transport from one tangent space to another, because its differential is a map
\[
	d(R_x^{-1})_y: T_y\mcM \to T_{R_x^{-1}(y)}\mcM.
\]

In this work, we consider the symplectic Stiefel manifold, which is formally introduced in the next section.
This matrix manifold features in a variety of applications often related to physical problems, see, e.g.,  \cite{GaoSonAbsilStykel2020symplecticoptimization, GauSonStykel:2024, OviedoHerrera:2023, BendokatZimmermann:2021}.
All of these works include a selection of retractions on the symplectic Stiefel manifold, with the essential ones being the
Cayley retraction \cite{GaoSonAbsilStykel2020symplecticoptimization, BendokatZimmermann:2021, OviedoHerrera:2023}, the SR retraction \cite{GauSonStykel:2024},  the Riemann and pseudo-Riemann exponentials of \cite{BendokatZimmermann:2021} and the quasi-geodesics of \cite{GaoSonAbsilStykel2020symplecticoptimization}.
To the best of our knowledge, only the Cayley retraction was shown to feature a closed for inverse, see \cite{BendokatZimmermann:2021}.
With this work, we add a second one to the list of symplectic Stiefel retractions with closed form inverse.

Symplectic and Hamiltonian matrices are also a classical subject of numerical linear algebra studies, \cite{Fassbender:1999hamiltonian, Kressner:2005, BennerKressnerMehrmann:2007}.

\section{Background: Symplectic and Hamiltonian matrices}
We define the structure matrices.
The $(n\times n)$-identity matrix is denoted by $I_n$.
Further,
\[
J_n=\begin{bmatrix}
	0 & I_n\\
	-I_n& 0
\end{bmatrix}, \hspace{0.1cm}
I_{n,p}:= \begin{bmatrix} I_p \\ 0 \end{bmatrix},
\hspace{0.1cm}
E_{n,p}:=\begin{bmatrix} I_{n,p} & 0 \\ 0 & I_{n,p} \end{bmatrix},
\]
with $J_n\in \R^{2n \times 2n}$, $I_{n,p}\in \R^{n \times p}$,
$E_{n,p}\in \R^{2n \times 2p}$.
It holds $J_{n}^T = -J_{n}=J_{n}^{-1}$.\\
The \emph{standard symplectic vector space} is $(\R^{2n},\omega_0)$ with bilinear form
\begin{equation*}
	\omega_0(x,y):=x^TJ_{n}y, \quad  
	x,y\in\R^{2n},
\end{equation*} 
called the \emph{standard symplectic form}.

For any, possibly rectangular matrix $A \in \R^{2n \times 2p}$ the \emph{symplectic inverse} \cite{PengMohseni2016}
is
\begin{equation*}
	A^+:= J_{2p}^TA^TJ_{2n}.
\end{equation*} 
The symplectic group is
\[
\Sptn = \{M\in\R^{2n\times 2n} \mid M^TJ_nM = J_n\}. 
\]
Its Lie algebra is the set of Hamiltonian matrices
\[
\sptn = \{\Omega\in\R^{2n\times 2n} \mid 
(J_n\Omega)^T = J_n\Omega\}.
\]
Hamiltonian matrices have the block structure
\begin{equation}
	\label{eq:Ham_block}
\Omega = \begin{bmatrix}
	A&B\\
	C& -A^T
\end{bmatrix},\quad B,C\in\sym(n).
\end{equation}
In particular, for $\Omega\in\sptn$,
\[
\Omega^TJ_n = -J_n\Omega,
\quad \Omega^+=J_n^T\Omega^TJ_n = -\Omega
\]
The set of skew-Hamiltonian matrices is denoted by
\[
\shtn = \{\Omega\in\R^{2n\times 2n} \mid (J_n\Omega)^T = -J_n\Omega\}.
\]
The rectangular relative of the symplectic group is the \emph{real symplectic Stiefel manifold}
\begin{align*}
	\SpStnp&:=\left\lbrace U \in \R^{2n \times 2p}\ \middle|\ U^+U=I_{2p} \right\rbrace \\
	&= \left\lbrace U \in \R^{2n \times 2p}\ \middle|\ U^TJ_{n}U=J_{p} \right\rbrace.
\end{align*}
It contains the matrices $U \in \R^{2n\times 2p}$, whose column vectors form symplectic bases for the $2p$-dimensional symplectic subspaces of $(\R^{2n},\omega_0)$ and was treated in \cite{GaoSonAbsilStykel2020symplecticoptimization,GaoSonAbsilStykel2021symplecticEuclidean,SonAbsilGaoStykel2021symplecticeigenvalue}.\\
It follows that for any $U\in \SpStnp$, $UU^+$ is a projector, since $UU^+UU^+ = UI_pU^+ = UU^+$.
This makes $I_{2n} - UU^+$ the complementary projector.\\
By \cite[Prop. 3.2]{BendokatZimmermann:2021}, the tangent space at $U \in \SpStnp$ is given by
\begin{equation}
	\label{eq:TangentSpaceStiefel}
		T_U\SpStnp 
		=\left\lbrace D \in \R^{2n\times 2p}\ \mid \ U^+D \in \sptp\right\rbrace.
\end{equation}
%
%
%

%
\section{A symplectic polar factor retraction}
In this section, we construct an explicit set of parameterizations/coordinate charts on $\SpStnp$ given by a retraction and its explicit inverse.
The construction can be considered as the symplectic counterpart to the polar-light retraction on the classical Stiefel manifold that has recently been proposed in \cite{JensenZimmermann:2026}. Where the polar-light retraction relies on the standard polar-factor decomposition \cite[Thm. 8.1]{Higham:2008:FM}, here we employ the  symplectic polar decomposition 
of \cite[Theorem 2.5]{Teretenkov:2022}, which, in turn relies on the theory developed in \cite{Fassbender:1999hamiltonian}.
This theorem states that any $X\in \R^{2p\times 2p}$ can be decomposed into
\begin{equation}
	\label{eq:sympPolarDecomp}
	X = MS, \quad M\in\shtn, \quad SJ_{p}S^T = J_p,
\end{equation}
if $Y = XX^+ (=-XJX^TJ)$ has no zero or negative real eigenvalues.
Observe that the theorem gives
\begin{equation}
	\label{eq:sympPolarDecomp1}
	X^T=S^TM^T, \text{ with } \quad S^T\in \Sptp.
\end{equation}

\subsection{The forward symplectic polar-light retraction}

Let $U\in \SpStnp$ be a fixed base point and let  $D\in T_U\SpStnp$.
\paragraph{Ansatz} 
As with the polar-light retraction \cite{JensenZimmermann:2026} on the classical Stiefel manifold,
we aim for a retraction
\begin{align}
	\nonumber
	R^f_U: & T_U\SpStnp \to \SpStnp\\
		\label{eq:def_Rf}
	&D\mapsto \left(U\exp_m(U^+D) + (I-UU^+)D\right)G_f.
\end{align}
The matrix factor $G_f\in\R^{2p\times 2p}$ is to be chosen as a `symplectifier' that ensures that 
$R^f_U(D) \in \SpStnp$.\footnote{A similar construction with two matrix factors $R,S$ is considered in \cite[eq. (6)]{OviedoHerrera:2023},
with $R$ in place of $\exp_m(U^+D)$ and $S$ in place of $G_f$. It is clear $R$, $S$ have to satisfy certain compatibility conditions to produce a valid retraction.}
Writing $\Delta= U\exp_m(U^+D) + (I-UU^+)D$, we require
\[
	G_f^T \Delta^TJ_n\Delta G_f = J_p.
\]
Minding that $U^+D\in\sptp$ and that the matrix exponential of a Hamiltonian matrix is a symplectic matrix,
a calculation shows
\begin{equation}
\label{eq:DeltaTJDelta}
\Delta^TJ_n\Delta = J_p + D^TJ_pD - (U^+D)^T J_p (U^+ D).
\end{equation}
This is a skew-symmetric matrix which for $D, U$ of small norm has full rank. The idea is now to perform a "mirror-splitting" into matrix factors with $J_p$ in the center.
This is obtained as follows: A real Schur decomposition yields
\[
\Delta^TJ_n\Delta = \Phi\Lambda_S\Phi^T,\quad
\Lambda_S = 
\begin{pmatrix}[cc|c|cc]
	0           & \lambda_1 &         &   & \\
	-\lambda_1  & 0         &         &   &  \\
	\hline
	&           & \ddots  &   &     \\
	\hline
	&           &         & 0 & 
	\lambda_p\\
	&           &         &-\lambda_p &0 
\end{pmatrix},
\]
with $\Phi\in O(2p)$.
Mind that the actual eigenvalues are not the $\lambda_j's$ but the  purely imaginary, complex conjugate pairs $\pm i\lambda_j\in \C\setminus\R$. 
Because of the full rank, after possibly reordering, we may assume that  $\lambda_j> 0$ for all $j=1,\ldots,p$.
Write $\Lambda_D = \diag(\lambda_1,\lambda_1,\ldots,\lambda_p,\lambda_p)\in \R^{2p\times 2p}$.
Then
\[
\Lambda_S = \Lambda_D^{\frac12}
\begin{pmatrix}[cc|c|cc]
	0           & 1 &         &   & \\
	-1  & 0         &         &   &  \\
	\hline
	&           & \ddots  &   &     \\
	\hline
	&           &         & 0 & 
	1\\
	&           &         &-1 &0 
\end{pmatrix}\Lambda_D^{\frac12}
=: \Lambda_D^{\frac12}I_S\Lambda_D^{\frac12},
\]
There is a permutation matrix $\Pi$ such that
$
\Pi^T I_S\Pi= J_p
$ so that we obtain the final splitting
\[
	\Delta^TJ_n\Delta = \left(\Phi\Lambda_D^{\frac12}\Pi\right) J_p \left(\Pi^T \Lambda_D^{\frac12}\Phi^T\right)
	=:X J_p X^T.
\]
Any matrix factor $G_f$ with $G_f^T (X J_p X^T) G_f = J_p$ in \eqref{eq:def_Rf} will guarantee that 
$R_f(D)\in \SpStnp$. Obviously, the choice is not unique.
Our proposal is to polar-decompose $X$ according to \cite[Theorem 2.5]{Teretenkov:2022}
\[
	X = \bigl((XX^+)^{\frac12}\bigr) \bigl((XX^+)^{-\frac12} X\bigr) =: M S.
\]
Via this decomposition, the `symplectic part' of $X$ is factored out, because
$S^T$ is symplectic and  $X J_p X^T = M (S^T)^T J_p S^T M^T =  M J_p M^T$. Hence, $S$ needs not be formed in practice, and for $G_f$, we choose
\begin{equation}
	\label{eq:Gf}
	G_f = (M^T)^{-1} =  \bigl((X^T)^+ X^T)^{-\frac12},
\end{equation}
where the square root is taken to be the principal primary matrix square root function \cite{Higham:2008:FM},
which inherits the skew-Hamiltonian structure of $(X^T)^+ X^T$ and  $\left((X^T)^+ X^T\right)^{-1}$.
Finally, observe that 
\[
	XX^+ = -XJ_pX^TJ_p = - \Delta^T J_n\Delta J_p.
\]
This shows that (1) $X$ needs not be formed in practice and (2) any ambiguity in the Schur decomposition has no impact on the final matrix $G_f$.
The definition of $G_f$ completes the construction of the forward symplectic polar factor retraction \eqref{eq:def_Rf}.
Algorithm \ref{alg:Rf_U} summarizes the procedure.
\begin{algorithm}[Numerical evaluation of $R^f_U$]
\begin{algorithmic}[1]
	\REQUIRE{$U\in \SpStnp$, $D\in T_U\SpStnp$.}
	\STATE{$K\leftarrow J_p + D^TJ_pD - (U^+D)^T J_p (U^+ D)$ \textcolor{gray}{$(=\Delta^TJ_n\Delta)$.}
	}
	\STATE{$ M \leftarrow\sqrt{- KJ_p}$. \textcolor{gray}{\small(principal primary square root)}}
	\STATE{$G_f \leftarrow M^{-T}$.}
	\ENSURE{$R^f_U(D) \leftarrow \left(U\exp_m(U^+D) + (I-UU^+)D\right)G_f$}
\end{algorithmic}
\label{alg:Rf_U}
\end{algorithm}
To prevent stability issues and to increase efficiency, in practice, the matrix exponential $\exp_m(U^+D)$
is replaced by the Cayley transformation, which also sends Hamiltonian matrices to symplectic matrices.
\subsection{The reverse symplectic polar-light retraction}
Let $U, \tilde U\in \SpStnp$, with $U$ acting as the base point.
We construct a map $R^\iota_U$ that maps relative open neighborhoods on $\SpStnp$
to open neighborhoods on $T_U\SpStnp$. The map is constructed to be the local inverse to $R^f_U$.
\paragraph{Ansatz} 
Mimicking again the polar-light retraction of \cite{JensenZimmermann:2026},
we set
\begin{align}
	\nonumber
	R^\iota_U: & \SpStnp \to T_U\SpStnp \\
		\label{eq:def_Ri}
	&\tilde U\mapsto U\log_m(U^+\tilde U G_r) + (I-UU^+)\tilde U G_r.
\end{align}
With the choice to employ the matrix logarithm made, we are only left to choose
the matrix factor $G_r\in\R^{2p\times 2p}$, which is to be chosen as a `Hamiltonifier' that ensures that 
$R^\iota_U(\tilde U)\in \SpStnp$. This means that $U^+D$
Writing $D= U\log_m(U^+\tilde U G_r) + (I-UU^+)\tilde U G_r$, and keeping in mind that $UU^+$ and $I_{2n}-UU^+$ are projectors,
we only require that $U^+\tilde U G_r\in \Sptp$, i.e.,
\[
\left(G_r^T(U^+\tilde U)^T\right) J_p  \left((U^+\tilde U) G_f\right) = J_p.
\]
In contrast to the forward construction, the mirror-splitting with $J_p$ in the center is immediate.
We apply the symplectic polar decomposition of \cite[Theorem 2.5]{Teretenkov:2022} to obtain
\[
	(U^+\tilde U)^T  = N T, \quad N\in \shtp, T^T\in \sptp,
\]
 where $N = \left((U^+\tilde U)^T\left((U^+\tilde U)^T\right)^+\right)^{\frac12}$.
Then, set
\[
	G_r = (N^T)^{-1}.
\]
This completes the definition of \eqref{eq:def_Ri}.
The numerical procedure is summarized in Alg. \ref{alg:Ri_U}.
\begin{algorithm}[Numerical evaluation of $R^\iota_U$]
\begin{algorithmic}[1]
	\REQUIRE{$U, \tilde U\in \SpStnp$.}
	\STATE{$H\leftarrow (U^+\tilde U)^T\left((U^+\tilde U)^T\right)^+$.}
	\STATE{$N \leftarrow\sqrt{H}$. \textcolor{gray}{\small(principal primary square root)}}
	\STATE{$G_r \leftarrow N^{-T}$.}
	\ENSURE{$R^\iota_U(\tilde U) \leftarrow U\log_m(U^+\tilde UG_r) + (I-UU^+)\tilde UG_r$}
\end{algorithmic}
\label{alg:Ri_U}
\end{algorithm}
To prevent stability issues and to increase efficiency, in practice, the matrix logarithm $\exp_m(U^+\tilde U G_r)$
is replaced by the inverse Cayley transformation, which also sends symplectic   matrices to Hamiltonian matrices.
\subsection{The maps of \eqref{eq:def_Rf} and \eqref{eq:def_Ri} are inverse to each other}
Given $U, \tilde U\in \Sptn$, with $U$ acting as the base point, we show that 
$R^f_U\circ  R^\iota_U = \mathrm{id}\vert_{\mathcal{U}}$, where $\mathcal{U}$ is a suitable open domain around $U$ such that the 
composite map is well defined.
It holds
\begin{align*}
	R^f_U\circ  R^\iota_U (\tilde U) &=R^f_U\left(U\log_m(U^+\tilde UG_r) + (I-UU^+)\tilde UG_r\right)\\
	&= \biggl(U\exp_m\log_m(U^+\tilde UG_r) \\
	& \qquad + (I-UU^+)\tilde UG_r\biggr)G_f\\
	&= \left(UU^+\tilde U + (I-UU^+)\tilde U\right)G_rG_f= \tilde UG_rG_f.
\end{align*}
This remains true, if $\exp_m(X), \log_m(Y)$ are replaced by their Cayley counterparts
$ \mathrm{Cay}(X) = (I - \frac12X)^{-1} (I + \frac12X)$, $ \mathrm{Cay}^{-1}(Y) = 2(Y+I)^{-1}(Y-I)$.
Hence, we are only required to check that $G_r=G_f^{-1}$ under the composition $R^f_U\circ  R^\iota_U$.
With $D=R^\iota_U (\tilde U)$ as the input argument to $R^f_U$ the same calculation as for \eqref{eq:DeltaTJDelta}
shows
\[
	\Delta=\tilde UG_r, \quad \Delta^TJ_n\Delta = G_r^TJ_pG_r=K.
\]
According to the procedure, $G_f = \left((-KJ_p)^{-\frac{1}{2}} \right)^T$.
By construction, $G_r\in \shtp$, i.e., $(J_pG_r)^T = -J_pG_r$ and we additionally have $G_r^{-1}\in \shtp$.
Hence,
\[
	-KJ_p = -G_r^TJ_pG_rJ_p = (J_pG_r)^T G_r J_p = J_p^TG_r^2J_p.
\]
Similarity transformations factor out of matrix functions, so that
\begin{align*}
	G_f &=  \left((-KJ_p)^{-\frac{1}{2}} \right)^T = \left((J_p^TG_r^2J_p)^{-\frac{1}{2}} \right)^T\\
	&= \left(J_p^TG_r^{-1}J_p \right)^T = J_p^T( -(J_pG_r^{-1})^T)  \\
	&= J_p^T(J_pG_r^{-1}) = G_r^{-1}.
\end{align*}
The proof that locally, $R^\iota_U \circ R^f_U  = \mathrm{id}\vert_{\mathcal{D}}$, where 
$\mathcal{D}$ is a suitable small open domain around $0$ in $T_U\SpStnp$ is analogous.
In total, we have proven
\begin{lemma}
	\label{thm:invertible}
	For $U\in \SpStnp$ the maps $R^f_U$ and $R^\iota_U$ of \eqref{eq:def_Rf} and \eqref{eq:def_Ri}, respectively,
	are invertible and inverse to each other on suitably small input domains, i.e., 
	$(R^f_U)^{-1} = R^\iota_U$, on domains, where the associated maps are well defined.
\end{lemma}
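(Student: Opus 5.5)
We prove both compositions, $R^f_U\circ R^\iota_U$ and $R^\iota_U\circ R^f_U$, are identities near $\tilde U=U$ and near $D=0$, respectively. Since $R^f_U(0)=U$, the tangent-space identity for the Cayley/exponential part, and the continuity of all matrix functions involved (principal square roots and logarithms near $I_{2p}$) make both maps well defined on small open neighborhoods. The first composition was computed above: $R^f_U(R^\iota_U(\tilde U))=\tilde U G_rG_f$, and $G_f=G_r^{-1}$. It therefore remains to justify the ingredients of that computation and to run the analogous argument in the other order.

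For the reverse order, let $D\in T_U\SpStnp$ be small and set $\tilde U=R^f_U(D)=\Delta G_f$, where $\Delta=U\exp_m(U^+D)+(I-UU^+)D$. Since $U^+U=I_{2p}$ and $U^+(I-UU^+)=0$, we get $U^+\tilde U=\exp_m(U^+D)G_f$ and $(I-UU^+)\tilde U=(I-UU^+)DG_f$. Thus $R^\iota_U(\tilde U)=U\log_m(\exp_m(U^+D)G_fG_r)+(I-UU^+)DG_fG_r$, where $G_r$ is computed from $U^+\tilde U$. It suffices to show $G_r=G_f^{-1}$, because then $\log_m\exp_m(U^+D)=U^+D$ for small $D$, and $UU^+D+(I-UU^+)D=D$.

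To show $G_r=G_f^{-1}$, write $P=\exp_m(U^+D)\in\Sptp$ and $(U^+\tilde U)^T=G_f^TP^T$. Then $H=(U^+\tilde U)^T((U^+\tilde U)^T)^+=G_f^TP^T\,(P^T)^+(G_f^T)^+=G_f^T(G_f^T)^+$, since $P^T$ is symplectic and so $P^T(P^T)^+=I$. The key structural fact, read off from the construction of $G_f$ as the inverse of a principal square root of a skew-Hamiltonian matrix, is that $G_f^T$ is skew-Hamiltonian. For a skew-Hamiltonian $A$ we have $A^+=J_p^TA^TJ_p=A$, so $H=(G_f^T)^2$ and $N=\sqrt H=G_f^T$. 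This last equality requires that $G_f^T$ is the \emph{principal} square root of its square. That holds near $D=0$, because $G_f\to I$ and its spectrum then lies in the open right half-plane. Hence $G_r=N^{-T}=G_f^{-1}$.

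I expect the main obstacle to be this structural step, together with the spectral conditions it relies on. One has to verify that $-KJ_p$, and likewise $H$, is skew-Hamiltonian with no eigenvalues on $(-\infty,0]$ in a neighborhood of the base point. One must then invoke the fact that primary matrix functions preserve skew-Hamiltonian structure, which follows from commuting with the similarity $A\mapsto J_p^TA^TJ_p$. Finally, uniqueness of the principal root is needed to identify $\sqrt{(G_f^T)^2}$ with $G_f^T$. I would establish all three by continuity: at $D=0$ and $\tilde U=U$ these matrices equal $I_{2p}$, so on small enough domains every eigenvalue stays near $1$ and the principal logarithm and square root are analytic there. Because both compositions are identities on these neighborhoods, the restricted maps are mutually inverse bijections, which gives the lemma.
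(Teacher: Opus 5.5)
Your proposal is correct and follows the paper's argument. Both compositions are reduced to the identity $G_r=G_f^{-1}$, which follows from the skew-Hamiltonian structure of the symplectifiers, the compatibility of primary matrix functions with $A\mapsto A^+$ (similarity and transposition), and uniqueness of the principal square root; the one difference is that you write out the reverse composition $R^\iota_U\circ R^f_U$, which the paper only calls analogous, with your identity $H=G_f^T(G_f^T)^+=(G_f^T)^2$ mirroring the paper's $-KJ_p=J_p^TG_r^2J_p$ (note that $\sqrt{(G_f^T)^2}=G_f^T$ actually holds wherever $M=\sqrt{-KJ_p}$ is principal, since $M^{-1}$ then also has spectrum in the open right half-plane, so you do not need $D$ near $0$ for that step).
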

\subsection{The maps of \eqref{eq:def_Rf} define a retraction}
We confirm that $R^f_U$ of \eqref{eq:def_Rf} features the three properties mentioned in the introduction:
It maps into the symplectic Stiefel manifold,
$R^f_U(D)\in \SpStnp$, by construction.
Moreover, for the special input $D=0$, we obtain $K=\Delta^T J_n\Delta= J_p$ in \eqref{eq:DeltaTJDelta}, which entails that 
$\sqrt{-KJ_p} = \sqrt{I_{2p}}= I_{2p}$, so that the associated `symplectifier' is $G_f=I_{2p}$ and
\[
	R^f_U(0)= \left(U\exp_m(U^+0) + (I-UU^+)0\right)I_{2p} = U.
\]
For computing the differential, we make a series expansion of $t\mapsto R^f_U(tD)$.
The matrix $K=K(t)$ from Alg. \ref{alg:Rf_U} corresponding to the input $tD$ reads
\[
	K(t) = J_p +  \mathcal{O}(t^2).
\]
By the Neumann series for the inverse of a matrix and the Taylor series for the square root function, the series expansion of
$G_f(t) = \left((-KJ_p)^{-\frac12}\right)^T$ is 
\[
	G_f(t) = \left( (I_{2p} + \mathcal{O}(t^2))^{-\frac12}\right)^T = I_{2p} + \mathcal{O}(t^2).
\]
This shows
\begin{align*}
	R^f_U(tD)&= \left(U\exp_m(t U^+D) + (I-UU^+)tD\right)G_f(t)\\
	&= \biggl(U\left(I_{2p} + t U^+D +\mathcal{O}(t^2)\right) \\
	& \qquad + (I-UU^+)tD\biggr) \left(I_{2p} + \mathcal{O}(t^2)\right)\\
	&=U + t(UU^+ D + (I-UU^+)D) + \mathcal{O}(t^2)\\
	&= U + tD  + \mathcal{O}(t^2).
\end{align*}
In particular
\[
	d(R^f_U)_0[D] = \frac{d}{dt}\big\vert_{t=0} R^f_U(tD) = D.
\]
This establishes the third property of a retraction.
Formula \eqref{eq:def_Rf} shows that $R^f_U(D)$ is smooth both in $D$ and in $U$.
(The matrix exponential, the principal matrix logarithm and the principal primary matrix square root functions are smooth when well-defined.)
The series expansion $-K(t)J_p = I_{2p} +  \mathcal{O}(t^2)$ also shows that for $|t|$ small, $-K(t)J_p$ is guaranteed to have all its eigenvalues 
close to $1$, which rules out the occurrence of eigenvalues with negative real part and guarantees that the matrix square root is well-defined.
We have established
\begin{theorem}
	The collection of maps $\{R^f_U\mid U\in\SpStnp\}$ defined by \eqref{eq:def_Rf} constitutes a retraction on $\SpStnp$.
\end{theorem}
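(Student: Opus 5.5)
To prove the theorem I will check the defining properties of a retraction from the introduction one at a time for the family $R^f_U$ of \eqref{eq:def_Rf}, with $G_f$ as in \eqref{eq:Gf}. Smoothness in $(U,D)$ will be argued on the tangent bundle. First I restrict to the open neighbourhood of the zero section on which $-KJ_p$ has no eigenvalues on the closed negative real axis. Here $K = J_p + D^TJ_pD - (U^+D)^TJ_p(U^+D)$. On this set the principal square root and the inverse are analytic, and $U^+ = J_p^TU^TJ_n$ depends polynomially on $U$. Hence $(U,D)\mapsto R^f_U(D)$ is a composition of smooth maps. Openness of this domain comes from continuity of the spectrum: at $D=0$ we have $-KJ_p = -J_pJ_p = I_{2p}$.

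\emph{Property (1).} I will show $R^f_U(D)\in\SpStnp$. Write $\Delta = U\exp_m(U^+D)+(I-UU^+)D$. The identity \eqref{eq:DeltaTJDelta} gives $\Delta^TJ_n\Delta = K$. I need $G_f^TKG_f=J_p$ with $G_f = M^{-T}$ and $M=\sqrt{-KJ_p}$. Since $K$ is skew-symmetric, $-KJ_p$ is skew-Hamiltonian, and the principal square root preserves this, so $M\in\shtp$. The skew-Hamiltonian condition $(J_pM)^T=-J_pM$ rearranges to $M^TJ_p = J_pM$. This gives
\[
M^TJ_pM = J_pM^2 = J_p(-KJ_p) = K .
\]
Hence $G_f^TKG_f = M^{-1}M^{-T}M^TJ_pMM^{-1} = J_p$. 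This is more direct than going through the Schur factor $X$, and it avoids that ambiguity altogether.

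\emph{Property (2).} $R^f_U(0)=U$ is immediate, since $K=J_p$ gives $M=I_{2p}=G_f$.

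\emph{Property (3).} I will expand $t\mapsto R^f_U(tD)$. Since $K(t)=J_p+t^2(\cdots)$, we get $-K(t)J_p = I_{2p}+\mathcal O(t^2)$, and the analytic square root and inverse give $G_f(t)=I_{2p}+\mathcal O(t^2)$. Combined with $\exp_m(tU^+D)=I+tU^+D+\mathcal O(t^2)$ and $UU^+D+(I-UU^+)D=D$, this yields $R^f_U(tD)=U+tD+\mathcal O(t^2)$, i.e.\ $d(R^f_U)_0=\mathrm{id}$. The same holds with the Cayley transform in place of $\exp_m$, since it agrees with the exponential to first order.

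The main obstacle is the structural step in (1). One must know that the principal square root of a skew-Hamiltonian matrix is again skew-Hamiltonian. I would justify this by noting that $A\in\shtp$ if and only if $A = J_p^TA^TJ_p$. Primary matrix functions commute with transposition and with similarity, so $f(A) = J_p^Tf(A)^TJ_p$. This holds provided the spectrum condition defining the principal root is met, which is guaranteed on the domain above.
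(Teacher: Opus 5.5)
Your treatment of smoothness, (2) and (3) matches the paper, and verifying (1) directly from the skew-Hamiltonian structure of $M$ is a sound idea. However, the computation you give for (1) is wrong.

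From $M^TJ_p=J_pM$ you correctly get $M^TJ_pM=J_pM^2$. But $J_pM^2=-J_pKJ_p$, and this equals $K$ only when $K$ commutes with $J_p$. That fails in general for $p\ge 2$: for $p=2$ and $K=J_2+\epsilon(e_1e_2^T-e_2e_1^T)$ one finds $-J_2KJ_2=J_2+\epsilon(e_3e_4^T-e_4e_3^T)\neq K$.

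There is a second, independent problem. With $G_f=M^{-T}$ one has $G_f^TKG_f=M^{-1}KM^{-T}$, so the identity you actually need is $K=MJ_pM^T$, not $K=M^TJ_pM$. The chain $M^{-1}M^{-T}M^TJ_pMM^{-1}$ you display is not $G_f^TKG_f$, and it simplifies to $M^{-1}J_p$ anyway.

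The repair is short. Write the skew-Hamiltonian condition as $M^T=J_pMJ_p^T$. Then
\[
MJ_pM^T=MJ_p^2MJ_p^T=-M^2J_p^T=M^2J_p=-KJ_p^2=K,
\]
so $G_f^TKG_f=M^{-1}(MJ_pM^T)M^{-T}=J_p$.

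With this correction your proof is complete, and for (1) it takes a different route from the paper. The paper argues ``by construction'': it first writes $K=XJ_pX^T$ via the real Schur form and a permutation. It then applies Teretenkov's symplectic polar decomposition $X=MS$ with $S^T$ symplectic, which gives $K=MJ_pM^T$.

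Your version needs neither the Schur splitting nor the external decomposition theorem. It uses only the fact that the principal root preserves skew-Hamiltonian structure, which you justify while the paper merely asserts it. It also makes explicit that $R^f_U(D)\in\SpStnp$ holds on the entire domain where the principal root of $-KJ_p$ exists.

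A minor point: the middle term of $K$ should read $D^TJ_nD$, since $D$ is $2n\times 2p$. This typo is inherited from the paper.
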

With the next statement, we quantify the domain on which the retraction $R^f_U$ from \eqref{eq:def_Rf} is well-define.
In view of Alg. \ref{alg:Rf_U}, the key issue is whether the primary matrix square root is well-defined.
\begin{lemma}
	\label{lem:Rf_defined}
	The map $(U,D) \mapsto R^f_U(D)$ of \eqref{eq:def_Rf} is well-defined, if the spectral radius of
	\[
	\mathcal{E}:=\mathcal{E}(U,D):= D^TJ_n(I-UU^+)DJ_p
	\]
	is strictly smaller than 1.
	A sufficient (not necessary) criterion for this to hold is
	\[
		\|D\|_2\|U\|_2< 1.
	\]
\end{lemma}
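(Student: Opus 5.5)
The plan is to reduce well-definedness of $R^f_U$ to a spectral condition on $-KJ_p$. In \eqref{eq:def_Rf} the factor $\exp_m(U^+D)$ (or its Cayley replacement, for small arguments) is always defined. So, by Alg.~\ref{alg:Rf_U}, the only possible failure is the principal primary square root of $-KJ_p$ and its subsequent inversion. Both exist as soon as $-KJ_p$ has no eigenvalues on the closed negative real axis $(-\infty,0]$.

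First I will simplify $K=\Delta^TJ_n\Delta$ from \eqref{eq:DeltaTJDelta}, using the projector structure rather than the expanded formula. The key identity is
\[
U^TJ_n(I-UU^+) = U^TJ_n - U^TJ_nUJ_p^TU^TJ_n = U^TJ_n - J_pJ_p^TU^TJ_n = 0,
\]
which uses $U^TJ_nU=J_p$ and $J_pJ_p^T=I_{2p}$. Likewise
\[
(I-UU^+)^TJ_n(I-UU^+) = J_n(I-UU^+).
\]
Writing $\Delta = UE + (I-UU^+)D$ with $E=\exp_m(U^+D)\in\Sptp$, the cross terms vanish. Since $E^TJ_pE=J_p$, this yields
\[
K = J_p + D^TJ_n(I-UU^+)D .
\]
Multiplying by $-J_p$ on the right and using $J_p^2=-I_{2p}$ gives
\[
-KJ_p = I_{2p} - \mathcal{E}.
\]
Hence the eigenvalues of $-KJ_p$ are $1-\mu$, where $\mu$ ranges over the spectrum of $\mathcal{E}$. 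If $\rho(\mathcal{E})<1$, every eigenvalue $1-\mu$ has strictly positive real part. Then $-KJ_p$ is invertible and has no eigenvalues on $(-\infty,0]$, so its principal square root $M$ exists and is invertible, and $G_f=M^{-T}$ is well defined. Joint smoothness in $(U,D)$ follows because the principal square root is analytic on this set of matrices.

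For the sufficient criterion I use $\rho(\mathcal{E})\le\|\mathcal{E}\|_2$ and try to bound the norm by a product of $\|D\|_2$ and $\|U\|_2$. The first attempt is the crude estimate
\[
\|\mathcal{E}\|_2\le \|D\|_2^2\,\|I-UU^+\|_2 \le \|D\|_2^2\,(1+\|U\|_2^2),
\]
which uses $\|J\|_2=1$. This falls short of the stated $\|D\|_2\|U\|_2<1$. To sharpen it, I would exploit the tangent condition $U^+D\in\sptp$ from \eqref{eq:TangentSpaceStiefel}, together with the cyclic invariance of nonzero spectra. The latter lets me consider $(I-UU^+)DJ_pD^TJ_n$ instead, and split $D=UU^+D+(I-UU^+)D$ so that only the normal component enters.

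The main obstacle is this last step: extracting exactly the product $\|D\|_2\|U\|_2$ rather than $\|D\|_2^2(1+\|U\|_2^2)$. If a sharp bound fails, I would state the criterion in the form the norm estimate actually supports. Note that $\|U\|_2\ge 1$ on $\SpStnp$, which should help when comparing the two forms.
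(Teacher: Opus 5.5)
Your reduction to the spectral condition is correct and is the paper's argument. You show $K=J_p+D^TJ_n(I-UU^+)D$; your derivation via $U^TJ_n(I-UU^+)=0$ justifies what the paper only states. Hence $-KJ_p=I_{2p}-\mathcal{E}$, and $\rho(\mathcal{E})<1$ puts the spectrum of $-KJ_p$ in the open right half-plane, so the principal square root exists and is invertible.

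\textbf{The gap: the sufficient criterion is not proved.} You leave it open. The triangle-inequality bound $\|I-UU^+\|_2\le 1+\|U\|_2^2$ is too weak. For an ortho-symplectic $U$ ($\|U\|_2=1$) and $\|D\|_2=0.9$ it only gives $\|\mathcal{E}\|_2\le 1.62$, which is inconclusive. Combined with $\|U\|_2\ge 1$, it yields only the weaker criterion $\|D\|_2\|U\|_2<1/\sqrt{2}$, not the lemma as stated. The tangent condition and cyclic spectra play no role. You also misframed the target: no bound linear in $\|D\|_2\|U\|_2$ is needed. A bound by its \emph{square} is exactly what suffices.

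\textbf{The missing tool} is the norm identity for oblique projectors, which the paper cites from Szyld: if $P^2=P$ and $0\neq P\neq I$, then $\|I-P\|_2=\|P\|_2$. To see it, write $P$ in an orthonormal basis adapted to $\operatorname{range}(P)\oplus\operatorname{range}(P)^{\perp}$ as $\begin{bmatrix} I & X\\ 0 & 0\end{bmatrix}$. Then $I-P=\begin{bmatrix} 0 & -X\\ 0 & I\end{bmatrix}$, and both matrices have $2$-norm $\sqrt{1+\|X\|_2^2}$.

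Apply this to $P=UU^+$. It is nonzero because $U^+U=I_{2p}$. If $p=n$, then $UU^+=I$ and $\mathcal{E}=0$, so there is nothing to prove. Also $\|U^+\|_2=\|J_p^TU^TJ_n\|_2=\|U\|_2$, since $J_p$ and $J_n$ are orthogonal. This gives
\[
\rho(\mathcal{E})\le\|\mathcal{E}\|_2\le\|D\|_2^2\,\|I-UU^+\|_2=\|D\|_2^2\,\|UU^+\|_2\le\|D\|_2^2\,\|U\|_2^2=\bigl(\|D\|_2\|U\|_2\bigr)^2 ,
\]
which is $<1$ precisely when $\|D\|_2\|U\|_2<1$.
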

\begin{proof}
	The matrix $K=\Delta^TJ_n\Delta$ from Alg. \ref{alg:Rf_U}
	can be written as $K= J_p + D^TJ_n(I-UU^+)D$.
	We require the primary principal square root of
	\[
	-KJ_p = I_{2p} - D^TJ_n(I-UU^+)DJ_p =: I_{2p} -\mathcal{E}.
	\]
	The eigenvalues of $I_{2p} -\mathcal{E}$ are $1-\lambda_j$, 
	where $\lambda_1,\ldots, \lambda_{2p}$ are the eigenvalues of $\mathcal{E}$. By assumption, the spectral radius $r_\sigma(\mathcal{E})$ is smaller than one, meaning that $\max_j |\lambda_j|<1$.
	Hence, all eigenvalues $1-\lambda_j$ are strictly contained in the unit disk around $1\in\C$. In particular, they have positive real part, which guarantees the existence and uniqueness of the  primary principal square root, see \cite[Theorem 1.29]{Higham:2008:FM}.\\
	By \cite[Theorem 2.6]{stewart_sun:1990}, 
	\[
	r_\sigma(\mathcal{E}) \leq \|\mathcal{E}\|_2 \leq \|D\|_2\|I-UU^+\|_2 \|D\|_2.
	\]
	By the projector norm identity \cite{Szyld2006},
	$\|I-UU^+\|_2 = \|UU^+\|_2\leq \|U\|_2^2.$ 
\end{proof}
\section{Numerical demonstration}
\label{sec:numex}
We include two preliminary experiments to expose numerical properties of the polar-light retraction. In the first one, we consider numerical accuracy and computation time for the symplectic polar-light retraction and its inverse under changes in the dimension $p$ of $\SpStnp$. In the second experiment, we conduct a numerical competition, including  alternative retractions reported in the literature. The implementation relies on the functions listed in Section \ref{sec:matlab}.
\subsection{Numerical behavior of the symplectic polar-light retraction}
We evaluate $R^f_U(D)$ of \eqref{eq:def_Rf} and $R^\iota_U(\tilde U)$ of \eqref{eq:def_Ri} for (pseudo) random inputs $U\in \SpStnp$,
$D\in T_U\SpStnp$ for dimensions $n=2000$ and $p\in\{200,400,600,800,1000\}$.
For this experiment, random symplectic Stiefel data is created as follows. We compute a QR-decomposition $M_{\C}=QR$ of a (pseudo) random complex matrix $M_{\C}\in\C^{n\times n}$. This gives a unitary matrix $Q=\Re(Q) + i\Im(Q) =: X+iY$, which we use to form a symplectic matrix $M=\begin{pmatrix}
	X & Y\\
	-Y & X
\end{pmatrix}$, and eventually a symplectic Stiefel point $U=ME_{n,p}$.
A corresponding tangent vector is constructed from a Hamiltonian matrix $\Omega$ as in \eqref{eq:Ham_block} with random blocks by setting $D=M\Omega E_{n,p}$.
Note that this construction makes $U$ an ortho-symplectic Stiefel matrix, i.e., in addition to $U^+U=I_{2p}$, we have $U^TU=I_{2p}$.
%
%
The input $\tilde U$ to the inverse retraction $R^\iota_U(\tilde U)$ is the output of the forward retraction $\tilde U = R^f_U(D)$ (which of course makes the numerical errors accumulate).
Fig. \ref{fig:time_acc} reports the results. The upper plot shows the wallclock time of computing  $R^f_U(D)$ and $R^\iota_U(\tilde U)$ with the MatLab\cite{matlab} functions from Section \ref{sec:matlab} averaged over 10 runs. 
The lower plot reports the values
\begin{itemize}
	\item $\|R^f_U(D)^+ R^f_U(D) - I_{2p}\|_F$, 
	\item $\|(J_p U^+ R^\iota_U(\tilde U))^T - (J_p U^+R^\iota_U(\tilde  U))\|_F$, 
	\item $\|D -  R^\iota_U(R^f_U(D))\|_F$, 
	\item $\|\tilde U -  R^f_U(R^\iota_U(\tilde U))\|_F$,
\end{itemize}
which are numerical checks for
\begin{itemize}
	\item $R^f_U(D)$ producing a valid point on $\SpStnp$, 
	\item $R^\iota_U(\tilde U)$ producing a valid tangent on $T_U\SpStnp$, 
	\item $R^\iota_U\circ R^f_U=\mathrm{id}_{\SpStnp}$,
	\item $R^f_U\circ R^\iota_U=\mathrm{id}_{T_U\SpStnp}$,
\end{itemize}
respectively.
The figure confirms the numerical feasibility of the proposed retractions maps both in terms of computation time and accuracy.
The computational effort for evaluating \eqref{alg:Rf_U} and \eqref{alg:Ri_U} is essentially the same. Yet, the reverse retraction suffers less from numerical round-off errors than the forward retraction.
\begin{figure}[htbp]
	\centerline{\includegraphics[width=0.5\textwidth]{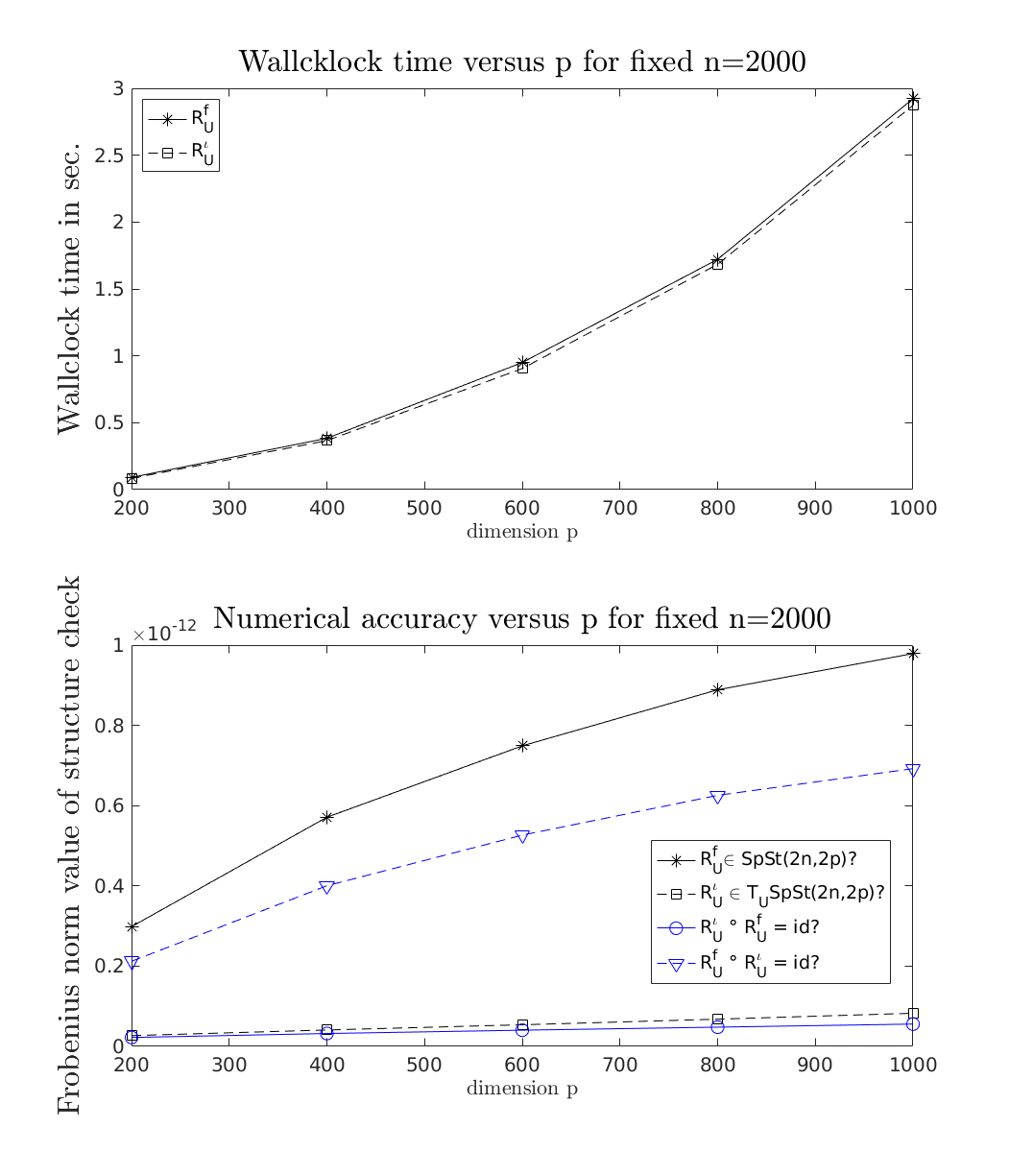}}
	\caption{Wallclock time (upper plot) and numerical accuracy (lower plot) associated with the forward and reverse symplectic polar-light retraction versus increasing block dimension $p$.}
	\label{fig:time_acc}
\end{figure}

\subsection{Comparison with state-of-the-art retraction}
In this subsection, we juxtapose the numerical features of the symplectic polar-light retraction with alternative retractions reported in the literature.
In the competition, we include
\begin{enumerate}
	\item the Riemann exponential (in reduced form) according to \cite[eq. (3.19)]{BendokatZimmermann:2021},
	\item the pseudo-Riemann exponential according to \cite[eq. (3.11)]{BendokatZimmermann:2021},
	\item The Cayley retraction that features in
  \cite[Prop. 5.5]{GaoSonAbsilStykel2020symplecticoptimization},	\cite[Prop. 5.2]{BendokatZimmermann:2021},
	\cite[eq. (22)]{OviedoHerrera:2023}, where the implementation follows the reduced form of the latter two references,
	\item the quasi-geodesic retraction of \cite[Lemma 5.1]{GaoSonAbsilStykel2020symplecticoptimization},
	\item the symplectic polar-light retraction of \eqref{eq:def_Rf}.
\end{enumerate}
As in \cite[Section 6.1]{BendokatZimmermann:2021}, a (pseudo) random point on $\SpStnp)$ is generated via $U = \text{cay}(\Omega)E$, with a Hamiltonian matrix $\Omega \in \sptn$ of unit Frobenius norm $\|\Omega\|_F=1$. Then a (pseudo) random tangent vector $\Delta \in T_U\SpStnp$, also scaled to $\|\Delta\|_F = 1$ is constructed.
The dimensions are set to $2n=4000$, $2p=1600$. 
For each retraction $R$ under consideration, we compute $R_U(D)$
for ten such random inputs and report the average effort in terms of the wallclock time and the average numerical feasibility in terms of
 $\|R_U(D)^+ R_U(D) - I_{2n}\|_F$. The latter quantifies, if the retraction output is indeed on the symplectic Stiefel manifold.
The results are reported in Table \ref{tab:performance_comp}.
In terms of efficiency, the polar-light retraction is second to the Cayley retraction, but being about three times slower. It is 1.5 to 2 times faster than the next best competitor. The numerical error is roughly one order of magnitude larger than for its competitors. This is likely due to the usage of a generic matrix square root function.
\begin{table}[htbp]
\caption{Performance comparison of known retractions on $\SpStnp$}
\begin{center}
\begin{tabular}{|l|c|c|}
\hline
\textbf{\textit{Retraction}} & \textbf{\textit{Time}} (sec.)&   $\|R_U(D)^+ R_U(D) - I_{2n}\|_F$\\
\hline
1) Riem. Exp & $26.20$& $4.42$e-14\\
\hline
2) p-Riem. Exp & $3.93$& $4.08$e-14\\
\hline
3) Cayley     & $0.74$& $7.00$e-14\\
\hline
4) quasi-geo &$4.24$& $5.08$e-14\\
\hline
5) polar-light & $2.32$& $9.10$e-13\\
\hline
\end{tabular}
\label{tab:performance_comp}
\end{center}
\end{table}
%
%

%
%
%
%
\section{Conclusions and future work}
\label{sec:conclusions}
By transferring the ideas of \cite{JensenZimmermann:2026} to the symplectic Stiefel manifold, we constructed a new retraction that features a closed form inverse.
To the best of our knowledge, prior to this work, the only other symplectic Stiefel retraction with this property was the Calyely retraction, where the closed-form inverse was found in \cite{BendokatZimmermann:2021}.

As with the Cayley retraction, for data on $\SpStnp$, only matrix function (such as $\exp_m,\log_m, \mathrm{Cay}, \mathrm{Cay}^{-1}, \sqrt{(\cdot)}$) that act on input matrices of dimensions $(2p\times 2p)$ are required.

The new retraction proved to be feasible and efficient, but about three times slower than the Cayley retraction. 
In a basic implementation, the numerical accuracy of the forward retraction is one order of magnitude lower than that of its competitors. Surprisingly, Fig. \ref{fig:time_acc} suggests that the numerical accuracy of the {\em reversed} retraction is on par with that of the other maps listed in Table \ref{tab:performance_comp}, which suggests to revisit implementation issues.

Additional properties such as the retraction order will be subject to future investigations.

%

\section{MatLab code}
\label{sec:matlab}
In this section, we list  MatLab \cite{matlab} code for a basic implementation of Algs. \ref{alg:Rf_U}, \ref{alg:Ri_U} that was used to produce the numerical results in Section \ref{sec:numex}.
The reader should be aware that the code uses MatLab's standard built-in function for computing principal matrix square roots,
which do not exploit the skew-Hamiltonian structure.
Performance and stability enhancements can be expected with a dedicated implementation. 
\paragraph{Algorithm \ref{alg:Rf_U} in MatLab syntax}
\begin{small}
	\begin{verbatim}
		function [RetU_D, Gf] ...
		= SpSt_PL_skewHam_ret(U, D, Jn, Jp)		
		[N, K] = size(U);    
		n      = floor(N/2);
		p      = floor(K/2);		
		% create sparse identity matrix
		I2p=speye(2*p);	
		% A = U^+ * D in sp(2p)
		Up = Jp'*U'*Jn;
		A  = Up*D;
		% Cayley gives symplectic 2px2p matrix
		S = Cayley(A);
		% update U
		UpD = U*(S-A) + D; 
		% Compute Y = -Delta^T J_n Delta*J
		% we build directly Y-transpose
		YT = I2p - Jp*(D'*(Jn*D)) + Jp*(A'*(Jp*A));
		Gf = inv(sqrtm(YT));
		%ensure skew-Hamiltonian structure
		Gf = (0.5)*Jp*((Jp*Gf)' - Jp*Gf);
		% Retracted matrix, Gf is skew-Hamiltonian
		RetU_D = (UpD)*Gf;
		end		
	\end{verbatim}
\end{small}
\paragraph{Algorithm \ref{alg:Ri_U} in MatLab syntax}
\begin{small}
\begin{verbatim}
function [TanU_U1, Gr] ...
   = SpSt_PL_invret(U, U1, Jn, Jp)
% Alg. 2 in MatLab syntax
[N, K]  = size(U);   
n       = floor(N/2);
p       = floor(K/2);
% form skew-symmetric kernel
Uplus   = Jp'*U'*Jn;
UplusU1 = (Uplus*U1);
% compute helper matrix Y-transpose
YT = -Jp*UplusU1'*Jp*UplusU1;
% inverse of primary square root of Y
Gr  = inv(sqrtm(YT));
%ensure skew-Hamiltonian structure
Gr = (0.5)*Jp*((Jp*Gr)' - Jp*Gr);
% form "symplectifier"
Sr = UplusU1*Gr;
% inverse Cayley gives matrix in sp(2p)
A = Cayley_inv(Sr);
%ensure Hamiltonian structure
A = (-0.5)*Jp*((Jp*A)' + Jp*A);
% Compute U*A + (I-U U^+) U1*Gr
TanU_U1 = U*(A-Sr) + U1*Gr;
end
\end{verbatim}
\end{small}
\paragraph{Auxiliary functions}
The structure matrices are produced with
\begin{small}
	\begin{verbatim}
Jp = spdiags([-ones(2*p,1), ones(2*p,1)],...
             [-p p], 2*p, 2*p);
	\end{verbatim}
\end{small}
Likewise for $J_n$.

The Cayley transformations are implemented as follows:
\begin{small}
	\begin{verbatim}
function [Cay] =  Cayley(X)
p =size(X,1);
% diagonal indices
diag_pp = find(eye(size(X)));
% form I-0.5X
Xminus = -0.5*X;  
Xminus(diag_pp) = Xminus(diag_pp) + 1.0;
% form I+0.5X
Xplus  = 0.5*X;
Xplus(diag_pp) = Xplus(diag_pp) + 1.0;
Cay = linsolve(Xminus, Xplus);
return;
end

function [Cay_inv] = Cayley_inv(Y)
p = size(Y,1);
% diagonal indices
diag_pp = find(eye(size(Y)));
% build 2*(Y-I)
Yminus = 2*Y;  
Yminus(diag_pp) = Yminus(diag_pp) - 2.0;
%
Yplus  = Y;   
Yplus(diag_pp)  = Yplus(diag_pp) + 1.0;
Cay_inv = linsolve(Yplus, Yminus);
return;
end

	\end{verbatim}
\end{small}

\bibliographystyle{IEEEtran}
\bibliography{IEEEabrv,IEEEexample}

\end{document}